\newtheorem{theorem}{Theorem}[section]
\newtheorem{proposition}[theorem]{Proposition}
\newtheorem{corollary}[theorem]{Corollary}
\theoremstyle{definition}
\newtheorem{definition}[theorem]{Definition}
\newtheorem{example}[theorem]{Example}
\theoremstyle{remark}
\numberwithin{equation}{section}
\begin{document}
\setcounter{page}{1}

\title[Some Hermite-Hadamard type inequalities]{Some Hermite-Hadamard type inequalities for the product of two operator preinvex functions }
\author[A.Ghazanfari \and  A.Barani]{A. G. Ghazanfari$^{1,*}$, A. Barani$^{1}$}
\address{$^{1}$ Department of
Mathematics, Lorestan University, P.O.Box 465, Khoramabad, Iran.}
\email{\textcolor[rgb]{0.00,0.00,0.84}{ghazanfari.a@lu.ac.ir; barani.a@lu.ac.ir}}


\subjclass[2010]{Primary 47A63; Secondary 26D07, 26D15.}

\keywords{Hermite-Hadamard inequality, invex sets, operator preinvex functions.}

\date{Received: xxxxxx; Revised: yyyyyy; Accepted: zzzzzz.
\newline \indent $^{*}$ Corresponding author}

\begin{abstract}
In this paper we introduce operator preinvex
functions and establish a Hermite-Hadamard type inequality for such functions. We give an estimate of the right hand
side of a Hermite-Hadamard type inequality in which some operator preinvex
functions of selfadjoint operators in Hilbert spaces are involved. Also some Hermite-Hadamard type inequalities
for the product of two operator preinvex functions are given.
\end{abstract}\maketitle
\section{introduction}

The following inequality holds for any convex function $f$ defined on $\mathbb{R}$ and $a,b \in \mathbb{R}$, with $a<b$
\begin{equation}\label{1.1}
f\left(\frac{a+b}{2}\right)\leq \frac{1}{b-a}\int_a^b f(x)dx\leq \frac{f(a)+f(b)}{2}
\end{equation}
Both inequalities hold in the reversed direction if $f$ is concave. We note that
Hermite-Hadamard's inequality may be regarded as a refinement of the concept of convexity and it follows easily from
Jensen's inequality.
The classical Hermite-Hadamard inequality provides estimates of the mean value
of a continuous convex function $f : [a, b] \rightarrow \mathbb{R}$.
The Hermite-Hadamard inequality has several applications in nonlinear analysis and
the geometry of Banach spaces, see \cite{kik, con}.

In recent years several extensions and generalizations have been considered for
classical convexity. We would like to refer the reader to \cite{bar, dra2, wu} and references
therein for more information. A number of papers have been written on this inequality providing
some inequalities analogous to Hadamard's inequality
given in (\ref{1.1}) involving two convex functions, see \cite{pach, bak, tun}.
Pachpatte in \cite{pach} has proved the following theorem for the product of two convex functions.
\begin{theorem}\label{t1}
Let $f$ and $g$ be real-valued, nonnegative and convex functions on $[a, b]$. Then
\begin{equation*}
\frac{1}{b-a}\int_{a}^{b}f(x)g(x)dx\leq\frac{1}{3}M(a, b)+\frac{1}{6}N(a, b),
\end{equation*}
\begin{equation*}
2f\left(\frac{a+b}{2}\right)g\left(\frac{a+b}{2}\right)\leq\frac{1}{b-a}\int_{a}^{b}f(x)g(x)dx+\frac{1}{6}M(a, b)+\frac{1}{3}N(a, b),
\end{equation*}
where $M(a, b)=f(a)g(a)+f(b)g(b), N(a, b)=f(a)g(b)+f(b)g(a)$.
\end{theorem}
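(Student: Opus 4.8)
The plan is to reduce both inequalities to elementary one-variable integrals via the substitution $x=(1-t)a+tb$, $t\in[0,1]$, under which $\frac{1}{b-a}\int_a^b f(x)g(x)\,dx=\int_0^1 f((1-t)a+tb)\,g((1-t)a+tb)\,dt$. For the first inequality I would apply convexity to each factor, namely $f((1-t)a+tb)\leq (1-t)f(a)+tf(b)$ and likewise for $g$, and then use the nonnegativity of $f$ and $g$ to multiply these two bounds without reversing the inequality. Expanding the product exhibits $(1-t)^2 f(a)g(a)+t^2 f(b)g(b)+t(1-t)N(a,b)$ as an upper bound for $f(x)g(x)$; integrating over $t\in[0,1]$ with $\int_0^1(1-t)^2\,dt=\int_0^1 t^2\,dt=\frac13$ and $\int_0^1 t(1-t)\,dt=\frac16$ yields the first claim immediately.

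For the second inequality the idea is to write the midpoint as the average of the two symmetric points $X=(1-t)a+tb$ and $Y=ta+(1-t)b$, since $\tfrac{X+Y}{2}=\tfrac{a+b}{2}$ for every $t$. Convexity then gives $f\!\left(\frac{a+b}{2}\right)\leq \frac12\bigl(f(X)+f(Y)\bigr)$ and $g\!\left(\frac{a+b}{2}\right)\leq \frac12\bigl(g(X)+g(Y)\bigr)$, and multiplying (again using nonnegativity) produces the bound $4f\!\left(\frac{a+b}{2}\right)g\!\left(\frac{a+b}{2}\right)\leq f(X)g(X)+f(Y)g(Y)+f(X)g(Y)+f(Y)g(X)$. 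Integrating in $t$, the two diagonal terms each reproduce the integral $I:=\frac{1}{b-a}\int_a^b fg$, because the substitution $t\mapsto 1-t$ identifies the $Y$-integral with the $X$-integral, so the whole matter reduces to estimating the cross terms $f(X)g(Y)+f(Y)g(X)$.

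The main work, and the step I expect to be the chief obstacle, is bounding these cross terms with the correct weights. I would apply convexity to each factor, pairing them as $f(X)\leq (1-t)f(a)+tf(b)$ with $g(Y)\leq tg(a)+(1-t)g(b)$, and the mirror images for $f(Y)$ and $g(X)$, then expand and collect. The coefficients of $f(a)g(a)$ and $f(b)g(b)$ combine into $2t(1-t)$, while the mixed products $f(a)g(b)$ and $f(b)g(a)$ each acquire $t^2+(1-t)^2$, giving the clean estimate $f(X)g(Y)+f(Y)g(X)\leq 2t(1-t)M(a,b)+(t^2+(1-t)^2)N(a,b)$, whose integral over $[0,1]$ is $\frac13 M(a,b)+\frac23 N(a,b)$. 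Substituting back yields $4f\!\left(\frac{a+b}{2}\right)g\!\left(\frac{a+b}{2}\right)\leq 2I+\frac13 M(a,b)+\frac23 N(a,b)$, and dividing by $2$ is exactly the second inequality. The only delicate points are keeping the weight pairing straight in the cross terms (it is essential that $X$ carries weight $t$ on $b$ while $Y$ carries weight $t$ on $a$) and recalling that nonnegativity is precisely what licenses multiplying the two convexity bounds.
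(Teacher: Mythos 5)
Your proof is correct, and it follows essentially the same route the paper itself uses: the paper states Theorem \ref{t1} without proof (citing Pachpatte), but its proofs of the operator-preinvex generalizations --- Theorem 3.2 for the first inequality (factorwise convexity bounds multiplied via nonnegativity, then integrating $(1-t)^2$, $t^2$, $t(1-t)$) and Theorem 3.3 for the midpoint inequality (writing the midpoint as the average of the symmetric points $D$ and $E$, expanding the product of averages, and bounding the cross terms with exactly your weight pairing to get $2t(1-t)M+(t^2+(1-t)^2)N$) --- are precisely the scalar argument you give, transplanted to operators. No gaps; your bookkeeping of the cross-term coefficients and the resulting constants $\tfrac13$, $\tfrac16$, $\tfrac23$ all check out.
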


A significant generalization of convex functions is that of invex
functions introduced by Hanson in \cite{han}.
In this paper we introduce operator preinvex functions and give an operator version
of the Hermite-Hadamard inequality for such functions.

First, we review the operator order in $B(H)$ and the continuous functional calculus for a bounded selfadjoint operator.
For selfadjoint operators $A, B \in B(H)$ we write $A\leq B ( \text{or}~ B\geq A)$ if $\langle Ax,x\rangle\leq\langle Bx,x\rangle$ for every vector $x\in H$,
we call it the operator order.

Now, let $A$ be a bounded selfadjoint linear operator on a complex Hilbert space $(H; \langle .,.\rangle)$
and $C(Sp(A))$ the $C^*$-algebra of all continuous complex-valued functions on the spectrum of $A$.
The Gelfand map establishes a $*$-isometrically isomorphism $\Phi$ between
$C(Sp(A))$ and the $C^*$-algebra $C^*(A)$ generated by $A$  and the identity operator $1_H$ on $H$ as
follows (see for instance \cite[p.3]{fur}):
For $f, g\in C (Sp (A))$ and $\alpha,\beta\in\mathbb{C}$
\begin{align*}
&(i)~\Phi(\alpha f+\beta g)=\alpha\Phi(f)+\beta\Phi(g);\\
&(ii)~\Phi(fg)=\Phi(f)\Phi(g)~\text{and}~\Phi(f^*)=\Phi(f)^*;\\
&(iii)~\|\Phi(f)\|=\|f\|:=\sup_{t\in Sp(A)} |f(t)|;\\
&(iv)~\Phi(f_0)=1 ~\text{and}~ \Phi(f_1)=A,~\text{ where } f_0(t)=1~ \text{and } f_1(t)=t, \text{for}~\\
&t\in Sp(A).
\end{align*}
If $f$ is a continuous complex-valued functions on $Sp(A)$, the element $\Phi(f)$ of $C^*(A)$ is denoted by
$f(A)$, and we call it the continuous functional calculus for a bounded selfadjoint operator $A$.

If $A$ is a bounded selfadjoint operator and $f$ is a real-valued continuous function on $Sp (A)$,
then $f (t) \geq 0$ for any $t \in Sp (A)$ implies that $f (A) \geq 0$, i.e., $f (A)$ is a positive
operator on $H$. Moreover, if both $f$ and $g$ are real-valued functions on $Sp (A)$ such that
$f(t) \leq g (t)$ for any $t\in sp(A)$, then $ f(A)\leq g(A)$ in the operator order in $B(H)$.

A real valued continuous function $f$ on an interval $I$ is said to be operator convex
(operator concave) if
\begin{equation*}
f((1 -\lambda)A +\lambda B)\leq (\geq) (1-\lambda) f (A) + \lambda f (B)
\end{equation*}
in the operator order in $B(H)$, for all $ \lambda\in [0, 1] $ and for every bounded self-adjoint operators $A$ and $B$
in $B(H)$ whose spectra are contained in $I$.

For some fundamental results on operator convex (operator concave) and operator monotone functions, see \cite{fur, bha} and
the references therein.

Dragomir in \cite{dra1} has proved a Hermite-Hadamard type inequality for operator convex functions:

\begin{theorem}\label{t2}
Let $f : I \rightarrow \mathbb{R}$ be an operator convex function on the interval $I$. Then
for any selfadjoint operators $A$ and $B$ with spectra in $I$ we have the inequality
\begin{multline*}
\left(f\left(\frac{A+B}{2}\right)\leq\right)\frac{1}{2}\left[f\left(\frac{3A+B}{4}\right)+f\left(\frac{A+3B}{4}\right)\right]\\
\leq\int_0^1 f((1-t)A+tB))dt\\
\leq\frac{1}{2}\left[f\left(\frac{A+B}{2}\right)+\frac{f(A)+f(B)}{2}\right]\left(\leq\frac{f(A)+f(B)}{2}\right).
\end{multline*}
\end{theorem}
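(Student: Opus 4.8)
The plan is to reduce the entire chain to a single elementary Hermite--Hadamard estimate on an arbitrary segment and then to apply it twice after halving the interval of integration. Throughout I would pass to scalar inequalities by fixing $x\in H$ and using that $P\leq Q$ in the operator order is equivalent to $\langle Px,x\rangle\leq\langle Qx,x\rangle$ for all $x$; since $t\mapsto\langle f((1-t)A+tB)x,x\rangle$ is continuous, its Riemann integral equals $\langle\int_0^1 f((1-t)A+tB)\,dt\,x,x\rangle$, so integration preserves the operator order and every operator inequality below follows from the corresponding scalar inequality obtained by integrating a pointwise estimate.

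First I would record the elementary estimate: for selfadjoint $C,D$ with spectra in $I$,
\begin{equation*}
f\!\left(\frac{C+D}{2}\right)\leq\int_0^1 f((1-u)C+uD)\,du\leq\frac{f(C)+f(D)}{2}.
\end{equation*}
The right inequality is immediate upon integrating the defining operator convexity inequality $f((1-u)C+uD)\leq(1-u)f(C)+uf(D)$ over $u\in[0,1]$. For the left inequality I would apply operator convexity to the midpoint representation $\frac{C+D}{2}=\frac12[(1-u)C+uD]+\frac12[uC+(1-u)D]$, obtaining $f(\frac{C+D}{2})\leq\frac12 f((1-u)C+uD)+\frac12 f(uC+(1-u)D)$, and then integrate in $u$; the substitution $u\mapsto 1-u$ shows the two resulting integrals coincide, giving the claim.

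Next, setting $M=\frac{A+B}{2}$, I would split the central integral and rescale each half. The affine change of variable $u=2t$ gives
\begin{equation*}
\int_0^{1/2} f((1-t)A+tB)\,dt=\frac12\int_0^1 f((1-u)A+uM)\,du,
\end{equation*}
because $(1-t)A+tB=(1-u)A+uM$ when $u=2t$; symmetrically $\int_{1/2}^1 f((1-t)A+tB)\,dt=\frac12\int_0^1 f((1-u)M+uB)\,du$. Applying the elementary estimate to the segments $[A,M]$ and $[M,B]$, and using $\frac{A+M}{2}=\frac{3A+B}{4}$ and $\frac{M+B}{2}=\frac{A+3B}{4}$, the sum of the two halves yields at once the lower bound $\frac12[f(\frac{3A+B}{4})+f(\frac{A+3B}{4})]$ and the upper bound $\frac12[f(M)+\frac{f(A)+f(B)}{2}]$, which are precisely the two central terms of the chain.

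Finally, the two parenthesized outer inequalities are direct consequences of operator convexity: writing $\frac{A+B}{2}=\frac12\cdot\frac{3A+B}{4}+\frac12\cdot\frac{A+3B}{4}$ gives $f(\frac{A+B}{2})\leq\frac12[f(\frac{3A+B}{4})+f(\frac{A+3B}{4})]$, while $f(\frac{A+B}{2})\leq\frac{f(A)+f(B)}{2}$ is the midpoint case of the definition and produces the rightmost bound. I expect the only delicate point to be the careful justification that Riemann integration of the continuous operator-valued integrand preserves the operator order; once that is settled via the scalarization above, every remaining step is a routine integration of a pointwise operator convexity inequality, so there is no substantial obstacle beyond bookkeeping.
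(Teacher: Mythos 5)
Your proof is correct, and every step checks out: the scalarization argument is sound (the operator order is defined exactly by testing against unit vectors, and the operator-valued integral commutes with $\langle\,\cdot\,x,x\rangle$), the segment lemma is proved properly (integrating the convexity inequality for the right bound, the midpoint representation plus the substitution $u\mapsto 1-u$ for the left), the reparametrizations $u=2t$ and $u=2t-1$ do identify the two halves of the integral with the segments $[A,M]$ and $[M,B]$, and the outer inequalities follow from the convex combinations you wrote down. One point worth noting about the comparison: the paper does not prove this theorem at all — it is quoted as Dragomir's result from reference [dra1] — so the only proof in the paper to measure against is that of its generalization, Theorem \ref{t4}, which recovers this statement by taking $\eta(A,B)=A-B$ and $k^p=2$. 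That proof uses precisely your two ingredients, just packaged differently: it scalarizes via $\varphi(t)=\langle f(A+t\eta(B,A))x,x\rangle$ (convex by Proposition \ref{p1}), applies the classical scalar Hermite--Hadamard inequality on the subintervals $[i/k^p,(i+1)/k^p]$ of $[0,1]$, and sums; your operator-level segment lemma applied to $[A,\tfrac{A+B}{2}]$ and $[\tfrac{A+B}{2},B]$ after an affine change of variables is the same halving decomposition, with the scalar Hermite--Hadamard inequality re-derived inline rather than invoked. So your route is essentially the standard one; what it buys is self-containedness (you never need to cite the classical inequality \eqref{1.1}), while the paper's formulation buys uniformity over arbitrary dyadic-type partitions $k^p$ and over general $\eta$-paths. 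The only detail both you and the paper leave implicit is that convex combinations such as $\tfrac{3A+B}{4}$, $\tfrac{A+B}{2}$ again have spectra in $I$, which is needed to apply operator convexity to them; this follows from the standard estimate $\mathrm{Sp}((1-t)C+tD)\subseteq[(1-t)\min\mathrm{Sp}(C)+t\min\mathrm{Sp}(D),\,(1-t)\max\mathrm{Sp}(C)+t\max\mathrm{Sp}(D)]\subseteq I$ and is routine.
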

Moslehian in \cite{mos} generalized the above theorem \ref{t2} as follows:

\begin{theorem}\label{t3}
If $A,B$ are self-adjoint operators on a Hilbert space $H$ with spectra
in an interval $J$, $f$ is an operator convex function on $J$ and $k, p$ are positive integers,
then
\begin{multline*}
f\left(\frac{A+B}{2}\right)\leq\frac{1}{k^p}\sum_{i=0}^{k^p-1}f\left(\frac{2i+1}{2k^p}A+\left(1-\frac{2i+1}{2k^p}\right)B\right)\\
\leq\int_0^1 f((1-t)A+tB))dt\\
\leq\frac{1}{2k^p}\sum_{i=0}^{k^p-1}\left[f\left(\frac{i+1}{k^p}A+\left(1-\frac{i+1}{k^p}\right)B\right)
+f\left(\frac{i}{k^p}A+\left(1-\frac{i}{k^p}\right)B\right)\right]\\
\leq\frac{f(A)+f(B)}{2}.
\end{multline*}
\end{theorem}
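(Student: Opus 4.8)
The plan is to reduce everything to the two–sided operator Hermite--Hadamard inequality already recorded as the extreme terms of Theorem \ref{t2}, applied on each piece of an equipartition of $[0,1]$, and then to reassemble the pieces. Throughout write $n=k^{p}$ (a positive integer) and introduce the auxiliary map $h(s)=f(sA+(1-s)B)$ for $s\in[0,1]$. First I would check that $h$ is well defined and operator convex: for each $s$ the operator $X_{s}:=sA+(1-s)B$ is selfadjoint with $\langle X_{s}x,x\rangle=s\langle Ax,x\rangle+(1-s)\langle Bx,x\rangle\in J$ for every unit vector $x$, so $Sp(X_{s})\subseteq J$ and $h(s)=f(X_{s})$ makes sense; and since $s\mapsto X_{s}$ is affine, operator convexity of $f$ yields $h(\lambda s_{1}+(1-\lambda)s_{2})\leq\lambda h(s_{1})+(1-\lambda)h(s_{2})$. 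The substitution $s=1-t$ also shows $\int_{0}^{1}f((1-t)A+tB)\,dt=\int_{0}^{1}h(s)\,ds$, which lets me phrase the central integral in the $s$–variable.

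For the two inequalities flanking the integral I would fix $i\in\{0,\dots,n-1\}$ and set $A_{i}=\frac{i}{n}A+(1-\frac{i}{n})B$ and $B_{i}=\frac{i+1}{n}A+(1-\frac{i+1}{n})B$; these are selfadjoint with spectra in $J$ and satisfy $\frac{A_{i}+B_{i}}{2}=\frac{2i+1}{2n}A+(1-\frac{2i+1}{2n})B$. Applying the extreme terms of Theorem \ref{t2} to the pair $A_{i},B_{i}$ gives
\[
h\!\left(\tfrac{2i+1}{2n}\right)\leq\int_{0}^{1}f((1-u)A_{i}+uB_{i})\,du\leq\frac{h(i/n)+h((i+1)/n)}{2}.
\]
The change of variables $s=\frac{i+u}{n}$ turns the middle integral into $n\int_{i/n}^{(i+1)/n}h(s)\,ds$, because a direct computation shows $(1-u)A_{i}+uB_{i}=\frac{i+u}{n}A+(1-\frac{i+u}{n})B$. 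Summing these $n$ inequalities and dividing by $n$ then collapses the integral pieces into $\int_{0}^{1}h(s)\,ds$ and produces exactly the second and third inequalities of the statement.

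It remains to prove the outermost two bounds, each a one–line consequence of the operator convexity of $h$. For the left end I would note that the $n$ midpoint parameters average to $\frac{1}{n}\sum_{i=0}^{n-1}\frac{2i+1}{2n}=\frac12$; hence the finite (operator) Jensen inequality for $h$, obtained by iterating the two–point convexity inequality, gives $f(\frac{A+B}{2})=h(\frac12)\leq\frac1n\sum_{i=0}^{n-1}h(\frac{2i+1}{2n})$. For the right end I would first rewrite $\frac{1}{2n}\sum_{i=0}^{n-1}[h(i/n)+h((i+1)/n)]=\frac{1}{2n}\{h(0)+h(1)+2\sum_{j=1}^{n-1}h(j/n)\}$, then use the chord estimate $h(j/n)\leq(1-\frac{j}{n})h(0)+\frac{j}{n}h(1)$ coming from convexity at the interior nodes; summing with $\sum_{j=1}^{n-1}(1-j/n)=\sum_{j=1}^{n-1}(j/n)=\frac{n-1}{2}$ collapses the right–hand side exactly to $\frac{h(0)+h(1)}{2}=\frac{f(A)+f(B)}{2}$.

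The main obstacle is not analytic depth but bookkeeping: one must verify the affine change of variables $(1-u)A_{i}+uB_{i}=\frac{i+u}{n}A+(1-\frac{i+u}{n})B$ together with the midpoint and endpoint identifications, so that the summed subinterval inequalities line up term by term with the claimed sums, and one must be slightly careful that the two–point operator convexity of $f$ genuinely upgrades to the finite Jensen inequality used at the left end (this is the standard induction, valid because the operator order is preserved under convex combinations). Once these are in place, the four inequalities follow by assembling the per–subinterval estimates.
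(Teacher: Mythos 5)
Your proof is correct, but note that the paper itself never proves Theorem \ref{t3}: it is quoted from Moslehian \cite{mos}, and what the paper actually proves is the generalization, Theorem \ref{t4}, for operator preinvex functions along $\eta$-paths, which recovers Theorem \ref{t3} upon taking $\eta(x,y)=x-y$. Measured against that proof, your route differs in two places. For the two inequalities flanking the integral, the paper scalarizes: by Proposition \ref{p1} the map $t\mapsto\langle f(A+t\eta(B,A))x,x\rangle$ is a convex real function for each unit vector $x$, and the classical scalar Hermite--Hadamard inequality is applied on each subinterval $[\frac{i}{k^p},\frac{i+1}{k^p}]$ and then summed; you instead stay at the operator level and feed each pair $(A_i,B_i)$ into the quoted operator Hermite--Hadamard inequality (Theorem \ref{t2}). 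Your version is shorter but uses Dragomir's theorem as a black box, whereas the paper's scalarization is the device that still works in the preinvex setting, where Theorem \ref{t2} is not directly applicable and convexity of the scalarized function must be established first (that is exactly the content of Proposition \ref{p1}). For the leftmost inequality the paper uses a reflection argument: condition C yields $f(A+\frac{1}{2}\eta(B,A))\leq\frac{1}{2}[f(A+s\eta(B,A))+f(A+(1-s)\eta(B,A))]$, and the sum over the midpoints $s=\frac{2i+1}{2k^p}$ is invariant under $s\mapsto 1-s$; you instead use the finite equal-weight Jensen inequality for the operator convex map $h$ together with the fact that the midpoints average to $\frac{1}{2}$ --- equivalent in effect, though the reflection trick avoids the induction your Jensen step requires. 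The rightmost bound is handled the same way in both proofs (chord estimates at the nodes and an arithmetic collapse), and your bookkeeping --- the identity $(1-u)A_i+uB_i=\frac{i+u}{n}A+(1-\frac{i+u}{n})B$, the fact that spectra of convex combinations of $A,B$ remain in $J$, and the sums $\sum_{j=1}^{n-1}\frac{j}{n}=\frac{n-1}{2}$ --- is all sound.
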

Motivated by the above results we investigate in this paper the operator version
of the Hermite-Hadamard inequality for operator preinvex functions.
We show that Theorem \ref{t3} holds for operator preinvex functions and establish an estimate of the right
hand side of a Hermite-Hadamard type inequality in which some operator preinvex
functions of selfadjoint operators in Hilbert spaces are involved. We also give some Hermite-Hadamard type inequalities for the product of two
operator preinvex functions.

\section{operator preinvex functions}

\begin{definition}\label{d1}
Let $X$ be a real vector space, a set $S\subseteq X$ is said to be invex with respect
to the map $\eta:S\times S\rightarrow X$, if for every $x,y\in S$
and $t\in[0,1]$,
\begin{equation}\label {2.1}
y+t\eta(x,y)\in S.
\end{equation}
\end{definition}
It is obvious that every convex set is invex with respect
to the map $\eta(x,y)=x-y$, but there exist invex sets which are not convex (see \cite{ant}).

Let $S\subseteq X$ be an invex set with respect to $\eta:S\times S\rightarrow X$.
For every $x,y\in S$ the $\eta-$path $P_{xv}$ joining the points $x$ and $v : =x+\eta(y,x)$  is defined as follows
\[
P_{xv} : = \{z: z = x+t\eta(y,x) : t \in [ 0, 1 ] \}.
\]
The mapping $\eta$ is said to be satisfies the condition  $C$ if for every  $x,y\in S$ and $t \in[0,1]$,

\begin{align*}
(C)\quad\quad \eta(y,y+t\eta(x,y))&=-t\eta(x,y),\\
\eta(x,y+t\eta(x,y))&=(1-t)\eta(x,y).
\end{align*}

Note that for every $x,y\in S$ and every $t_{1},t_{2}\in [ 0, 1 ]$ from condition $C$ we have
\begin{equation}\label{2.2}
\eta(y+t_{2}\eta(x,y),y+t_{1}\eta(x,y))=(t_{2}-t_{1})\eta(x,y),
\end{equation}
see \cite{moh, yan} for details.

Let $\mathcal{A}$ be a $C^*$-algebra, denote by $\mathcal{A}_{sa}$ the set of all self adjoint elements in $\mathcal{A}$.
\begin{definition}\label{d2}
Let $I$ be an interval in $\mathbb{R}$ and $S\subseteq B(H)_{sa}$ be an invex set with respect to $\eta:S\times S\rightarrow B(H)_{sa}$. A continuous function $f:I\rightarrow \mathbb{R}$ is said to be operator preinvex on $I$ with respect to $\eta$ for operators in $S$ if
\begin{equation}\label {2.3}
f(B+t\eta(A,B))\leq (1-t)f(B)+tf(A).
\end{equation}
in the operator order in $B(H)$, for all $t\in[0,1]$ and for every $A,B\in S$ whose spectra are contained in $I$.
\end{definition}
Every operator convex function is an operator preinvex with respect
to the map $\eta(A,B)=A-B$ but the converse does not holds (see the following example).

Now, we give an example of some operator preinvex functions and invex sets with respect to the maps $\eta$ which satisfy the conditions (C).
\begin{example}\label{e2}\rm{
\begin{enumerate}
\item[(a)] Suppose that $1_H$ is the identity operator on a Hilbert space $H$, and
\begin{align*}
T:&=\{A\in B(H)_{sa}: A\leq -1\times1_H\}\\
U:&=\{A\in B(H)_{sa}: 1_H\leq A\}\\
S:&=T\cup U\subseteq B(H)_{sa}.
\end{align*}
 Suppose that the function $\eta_1:S\times S\rightarrow B(H)_{sa}$ is defined by
 \[
\eta_1(A,B) =
\begin{cases}
A-B & \text{ \( A,B\in U \),}\\
A-B & \text{ \( A,B\in T \),}\\
1_H-B & \text{ \( A\in T, B\in U\),}\\
-1_H-B & \text{ \( A\in U, B\in T\)}.
\end{cases}
\]
Clearly $\eta_1$ satisfies condition $C$ and $S$ is an invex set with respect to $\eta_1$.
We show that the real function $f(t)=t^2$ is operator preinvex with respect to $\eta_1$ on every interval $I\subseteq\mathbb{R}$, for operators in $S$. Since $f$ is an operator convex function on $I$,
for the cases which $ \eta_1(A,B)=A-B$ the inequality (\ref{2.3}) holds. Let $ \eta_1(A,B)=1_H-B$, in this case we have $1_H\leq-A\leq A^2$ and
\begin{align*}
(B+t\eta_1(A,B))^2=(B+t(1_H-B))^2=((1-t)B+t1_H)^2\\
\leq (1-t)B^2+t1_H\leq(1-t)B^2+tA^2.
\end{align*}
Similarly, for the case $\eta_1=-1_H-B$ we have
\begin{align*}
(B+t\eta_1(A,B))^2=(B+t(-1_H-B))^2=((1-t)(-B)+t1_H)^2\\
\leq (1-t)B^2+t1_H\leq(1-t)B^2+tA^2,
\end{align*}
therefore, the inequality (\ref{2.3}) holds.

 But the real function $g(t)=a+bt,\quad a,b\in \mathbb{R}$ is not operator
preinvex with respect to $\eta_1$ on $S$.
\item[(b)] Suppose that  $V:=(-2\times1_H,0),~ W:=(0,2\times1_H),~ S:=V\cup W\subseteq B(H)_{sa}$ and the
function $\eta_2:S\times S\rightarrow B(H)_{sa}$ is defined by
 \[
\eta_2(A,B) =
\begin{cases}
A-B & \text{ \( A,B\in V \text{ or }A,B\in W\),}\\
0   & \text{ otherwise }.
\end{cases}
\]
Clearly $\eta_{2}$ satisfies condition $C$ and $S$ is an invex set with respect to $\eta_2$.
The constant functions $f(t)=a,~a\in \mathbb{R}$ is only operator preinvex functions with respect to $\eta_2$ for operators in  $S$.
Because for $\eta_2=0$,
\begin{align*}
f(B+t\eta_2(A,B))=f(B)\leq (1-t)f(B)+tf(A),
\end{align*}
implies that $f(A)-f(B)\geq 0$. interchanging $A$ ,$B$ we get $f(B)-f(A)\geq 0$.
\item[(c)] The function $f(t) = -|t|$ is not a convex function, but it is a operator preinvex function with
respect to $\eta_3$, where
\[
\eta_3(A,B) =
\begin{cases}
A-B & \text{ \( A,B\geq 0 \text{ or }A,B\leq 0\),}\\
B-A   & \text{ otherwise \(A\leq 0\leq B\)\text{ or }\(B\leq 0\leq A\)}.
\end{cases}
\]

\end{enumerate}
 }
\end{example}

Let $X$ be a vector space, $ x, y \in X$, $ x\neq y$. Define the segment
\[[x, y] :=(1- t)x + ty; t \in [0, 1].\]
We consider the function $f : [x, y]\rightarrow \mathbb{R}$ and the associated function
\begin{align*}
&g(x, y) : [0, 1] \rightarrow \mathbb{R},\\
&g(x, y)(t) := f((1 - t)x + ty), t \in [0, 1].
\end{align*}
Note that f is convex on $[x, y]$ if and only if $g(x, y)$ is convex on $[0, 1]$.
For any convex function defined on a segment $[x, y] \in X$, we have the Hermite-
Hadamard integral inequality

\begin{equation}\label{2.4}
f\left(\frac{x+y}{2}\right)\leq \int_0^1 f((1-t)x+ty)dt\leq \frac{f(x)+f(y)}{2},
\end{equation}

which can be derived from the classical Hermite-Hadamard inequality (1.1) for the
convex function $g(x, y) : [0, 1] \rightarrow \mathbb{R}$.

\begin{proposition}\label{p1}
Let $I$ be an interval in $\mathbb{R}$, $S\subseteq B(H)_{sa}$ an invex set with respect to $\eta:S\times S\rightarrow B(H)_{sa}$ and $\eta$ satisfies condition $C$ on $S$ and $f:I\rightarrow \Bbb R$ a continuous function. Then, for every $A,B\in S$ with spectra of $A,V:=A+\eta(B,A)$ in $I$, the function $f$ is operator preinvex on $I$ with respect to $\eta$ for operators in $\eta-$path $P_{AV}$ if and only if the function $\varphi_{x,A,B}:[0,1]\rightarrow \Bbb R$ defined by
\begin{equation}\label{2.5}
\varphi_{x,A,B}(t):=\langle f(A+t\eta(B,A))x,x\rangle
\end{equation}
is convex on $[0,1]$ for every $x\in H$ with $\|x\|=1$.
\end{proposition}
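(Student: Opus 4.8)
The plan is to convert the operator-order statement of preinvexity into a scalar statement by pairing against unit vectors, using the fact that for self-adjoint operators $P\leq Q$ holds if and only if $\langle Px,x\rangle\leq\langle Qx,x\rangle$ for every $x\in H$, and that by homogeneity it suffices to test unit vectors. The bridge between the two sides of the equivalence is the observation that the points of the $\eta$-path $P_{AV}$ are exactly the operators $C_t:=A+t\eta(B,A)$ for $t\in[0,1]$; since $\eta(B,A)=V-A$ these are the convex combinations $C_t=(1-t)A+tV$. Because $A$ and $V$ have spectra in the interval $I$, each $C_t$ is self-adjoint with spectrum in $I$, so $f(C_t)$ is well defined by the continuous functional calculus and $\varphi_{x,A,B}(t)=\langle f(C_t)x,x\rangle$ makes sense.

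For the forward implication I would fix $t_1,t_2\in[0,1]$ and $\lambda\in[0,1]$ and apply the defining inequality (\ref{2.3}) of operator preinvexity to the pair of path operators $C_{t_1}$ and $C_{t_2}$. The crucial computation uses condition $C$ in the form (\ref{2.2}) with $x=B$, $y=A$, which gives $\eta(C_{t_2},C_{t_1})=(t_2-t_1)\eta(B,A)$. Substituting this into (\ref{2.3}), with $C_{t_1}$ playing the role of $B$ and $C_{t_2}$ the role of $A$, and simplifying the argument shows
\[
C_{t_1}+\lambda\,\eta(C_{t_2},C_{t_1})=A+\bigl((1-\lambda)t_1+\lambda t_2\bigr)\eta(B,A)=C_{(1-\lambda)t_1+\lambda t_2},
\]
so preinvexity yields $f(C_{(1-\lambda)t_1+\lambda t_2})\leq(1-\lambda)f(C_{t_1})+\lambda f(C_{t_2})$. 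Pairing with a unit vector $x$ gives precisely $\varphi_{x,A,B}((1-\lambda)t_1+\lambda t_2)\leq(1-\lambda)\varphi_{x,A,B}(t_1)+\lambda\varphi_{x,A,B}(t_2)$, which is the asserted convexity.

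The converse runs along the same identity in reverse: given arbitrary path operators, write them as $C_{t_1},C_{t_2}$, use (\ref{2.2}) to reduce the preinvexity inequality to be proved to the scalar inequality $\varphi_{x,A,B}((1-t)t_1+t t_2)\leq(1-t)\varphi_{x,A,B}(t_1)+t\varphi_{x,A,B}(t_2)$, and then invoke the assumed convexity of $\varphi_{x,A,B}$ together with the fact that a scalar inequality holding for all unit vectors $x$ upgrades to an operator inequality. I expect the only genuine obstacle to be the bookkeeping: one must apply (\ref{2.2}) with the correct assignment of the two path parameters so that the affine reparametrization $t\mapsto(1-t)t_1+t t_2$ lands back inside $[0,1]$ and the resulting point stays on $P_{AV}$. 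Once this change of variable is pinned down, both directions collapse into the single computation above.
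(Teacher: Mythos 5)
Your proposal is correct and follows essentially the same route as the paper's own proof: both directions hinge on the identity $\eta(A+t_2\eta(B,A),\,A+t_1\eta(B,A))=(t_2-t_1)\eta(B,A)$ from condition $C$ (equation (\ref{2.2})), which shows that $C_{t_1}+\lambda\eta(C_{t_2},C_{t_1})=C_{(1-\lambda)t_1+\lambda t_2}$, after which one passes between the operator inequality and the scalar one via unit vectors exactly as you describe. The paper's proof is just the fleshed-out version of your computation, so there is nothing further to fix.
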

\begin{proof}
Let the function $f$ be operator preinvex on $I$ with respect to $\eta$ for operators in $\eta-$path $P_{AV}$. Suppose that $A,B\in S$ with spectra $A,V$ in $I$, since $A+t\eta(B,A)=tV+(1-t)A$ therefore for all $t\in [0,1],~ Sp(A+t\eta(B,A))\subseteq I$. If $t_{1},t_{2}\in [0,1]$ since $\eta$ satisfies condition $C$ on $S$, we have
\begin{equation}\label {2.7}
\begin{aligned}
\varphi_{x,A,B}((1-\lambda) t_{1}&+\lambda t_{2})=\langle f(A+((1-\lambda) t_{1}+\lambda t_{2})\eta(B,A))x,x\rangle\\
&=\langle f\left(A+ t_{1}\eta(B,A)+\lambda\eta( A+ t_{2}\eta(B,A),A+ t_{1}\eta(B,A)\right)x,x\rangle\\
&\leq \lambda \langle f(A+ t_{2}\eta(B,A))x,x\rangle+(1-\lambda)\langle f(A+ t_{1}\eta(B,A))x,x\rangle\\
&=\lambda\varphi_{x,A,B}(t_{2})+(1-\lambda)\varphi_{x,A,B}(t_{1}),
\end{aligned}
\end{equation}
 for every $\lambda \in [0,1]$  and $x\in H$ with $\|x\|=1$. Therefore, $\varphi_{x,A,B}$ is convex on $[0,1]$.

Conversely, suppose that $x\in H$ with $\|x\|=1$ and $\varphi_{x,A,B}$ is convex on $[0,1]$ and $C_{1}:=A+t_{1}\eta(B,A)\in P_{AV}, C_{2}:=A+t_{2}\eta(B,A)\in P_{AV}$. Fix $\lambda \in [0,1]$.
By (\ref{2.4}) we have
\begin{equation}\label {2.6}
\begin{aligned}
\langle f(C_{1}+\lambda\eta(C_{2},C_{1}))x,x\rangle&=\langle f(A+((1-\lambda) t_{1}+\lambda t_{2})\eta(B,A))x,x\rangle\\
&=\varphi_{x,A,B}((1-\lambda) t_{1}+\lambda t_{2})\\
&\leq (1-\lambda)\varphi_{x,A,B}(t_{1})+\lambda\varphi_{x,A,B}(t_{2})\\
&=(1-\lambda) \langle f(C_{1})x,x\rangle+\lambda\langle f(C_{2})x,x\rangle.
\end{aligned}
\end{equation}
Hence, $f$ is operator preinvex with respect to $\eta$ for operators in $\eta-$path $P_{AV}$.
\end{proof}
\section{Hermite-Hadamard type inequalities}

In this section we generalize Theorem \ref{t1} and Theorem \ref{t3} for operator preinvex functions and establish an estimate for the right-hand side of the Hermite-Hadamard operator inequality for such functions.
Some Hermite-Hadamard type inequalities for the product of two operator preinvex functions is also given.

The following Theorem is a generalization of Theorem \ref{t3} for operator preinvex functions.
\begin{theorem}\label{t4}
Let $S\subseteq B(H)_{sa}$ be an invex set with respect to $\eta:S\times S\rightarrow B(H)_{sa}$ and $\eta$ satisfies condition $C$. If for every $A,B\in S$ with spectra of $A, V:=A+\eta(B,A)$ in the interval $I$, the function $f : I \rightarrow \mathbb{R}$ is operator preinvex with respect to $\eta$ for operators in  $\eta-$path $P_{AV}$ and $k, p$ are positive integers, then the following inequalities holds
\begin{multline}\label{2.8}
f\left(A+\frac{1}{2}\eta(B,A)\right)\leq\frac{1}{k^p}\sum_{i=0}^{k^p-1}f\left(A+\frac{2i+1}{2k^p}\eta(B,A)\right)\\
\leq\int_0^1 f(A+t\eta(B,A))dt\\
\leq\frac{1}{2k^p}\sum_{i=0}^{k^p-1}\left[f\left(A+\frac{i+1}{k^p}\eta(B,A)\right)
+f\left(A+\frac{i}{k^p}\eta(B,A)\right)\right]\\
\leq\frac{f(A)+f(B)}{2}.
\end{multline}
\end{theorem}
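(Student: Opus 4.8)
The plan is to fix a unit vector $x\in H$, pass to the scalar function $\varphi_{x,A,B}(t)=\langle f(A+t\eta(B,A))x,x\rangle$, exploit its convexity on $[0,1]$ granted by Proposition \ref{p1}, establish the four corresponding inequalities for an arbitrary convex function on $[0,1]$, and then transfer them back to the operator order by letting $x$ range over all unit vectors. Since the relation $\langle Tx,x\rangle\leq\langle Sx,x\rangle$ for every unit vector $x$ is exactly the operator order $T\leq S$, this reduction loses nothing.

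First I would fix $x$ with $\|x\|=1$, write $\varphi:=\varphi_{x,A,B}$ (convex on $[0,1]$ by Proposition \ref{p1}), and set $n:=k^p$. Applying the classical Hermite-Hadamard inequality (\ref{1.1}) to $\varphi$ on each piece of the uniform partition $[i/n,(i+1)/n]$ of $[0,1]$ gives, for $0\leq i\leq n-1$,
\[
\varphi\!\left(\frac{2i+1}{2n}\right)\leq n\int_{i/n}^{(i+1)/n}\varphi(t)\,dt\leq\frac{\varphi(i/n)+\varphi((i+1)/n)}{2}.
\]
Summing over $i$ and dividing by $n$ produces the two middle inequalities of (\ref{2.8}) at the scalar level. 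For the leftmost inequality I would note that the midpoints average to $\frac1n\sum_{i=0}^{n-1}\frac{2i+1}{2n}=\frac12$, so Jensen's inequality gives $\varphi(\tfrac12)\leq\frac1n\sum_{i=0}^{n-1}\varphi\!\left(\frac{2i+1}{2n}\right)$. For the rightmost inequality I would bound each interior value by convexity, $\varphi(i/n)\leq(1-\tfrac in)\varphi(0)+\tfrac in\varphi(1)$; summing and using $\sum_{i=0}^{n-1}[\varphi(i/n)+\varphi((i+1)/n)]=\varphi(0)+2\sum_{i=1}^{n-1}\varphi(i/n)+\varphi(1)$ together with $\sum_{i=1}^{n-1}(1-\tfrac in)=\sum_{i=1}^{n-1}\tfrac in=\tfrac{n-1}{2}$ collapses the right side to $\tfrac{\varphi(0)+\varphi(1)}{2}$.

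Next I would translate back. The scalar quantities are read off as $\varphi(\tfrac12)=\langle f(A+\tfrac12\eta(B,A))x,x\rangle$, $\varphi(\tfrac{2i+1}{2n})=\langle f(A+\tfrac{2i+1}{2n}\eta(B,A))x,x\rangle$, $\varphi(i/n)=\langle f(A+\tfrac in\eta(B,A))x,x\rangle$, while continuity of $t\mapsto f(A+t\eta(B,A))$ lets me interchange the Riemann integral with the bounded functional $\langle\,\cdot\,x,x\rangle$, so that $\int_0^1\varphi(t)\,dt=\langle\bigl(\int_0^1 f(A+t\eta(B,A))\,dt\bigr)x,x\rangle$. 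The four scalar inequalities thus become inner-product inequalities holding for every unit $x$, hence operator inequalities; here $\varphi(1)=\langle f(V)x,x\rangle$ with $V=A+\eta(B,A)$ yields $\tfrac{f(A)+f(V)}{2}$ as the natural right-hand bound. To reach the stated form $\tfrac{f(A)+f(B)}{2}$ I would invoke operator preinvexity: applying (\ref{2.3}) to the pair with base $A$ and direction $\eta(B,A)$ at $t=1$ gives $f(V)=f(A+\eta(B,A))\leq f(B)$, so that $\tfrac{f(A)+f(V)}{2}\leq\tfrac{f(A)+f(B)}{2}$.

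The work here is organizational rather than deep, and the step I expect to require the most care is this last one: the partition/summation argument produces $f(V)=f(A+\eta(B,A))$ at the right endpoint, and replacing it by $f(B)$ must be done through the preinvexity inequality for the pair $A,B$, not merely through $\eta$-path preinvexity (along the path alone that endpoint inequality is vacuous, since $\eta(V,A)=\eta(B,A)$ by (\ref{2.2})). The two remaining routine points are checking that $A+t\eta(B,A)=tV+(1-t)A$ keeps its spectrum in $I$ for all $t\in[0,1]$ (which follows from $\alpha 1_H\leq A,V\leq\beta 1_H$ when $I=[\alpha,\beta]$, exactly as in the proof of Proposition \ref{p1}) and justifying the interchange of the integral with $\langle\,\cdot\,x,x\rangle$; everything else is the bookkeeping of summing the scalar Hermite-Hadamard estimates over the uniform partition.
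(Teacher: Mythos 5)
Your proof is correct, and its core --- reducing to the scalar convex function $\varphi_{x,A,B}$ via Proposition \ref{p1} and applying the classical Hermite--Hadamard inequality on each cell of the uniform partition --- is exactly the paper's argument for the two middle inequalities of (\ref{2.8}). Where you diverge is at the two outer inequalities. For the left one, the paper stays at the operator level: it uses the condition-$C$ identity (\ref{2.10}) to write $A+\frac{1}{2}\eta(B,A)$ as a midpoint of two symmetric points of the path and applies path preinvexity pairwise, then sums over $i$; your scalar Jensen argument (the midpoints $\frac{2i+1}{2k^p}$ average to $\frac{1}{2}$) is an equivalent and slightly shorter route, since the convexity of $\varphi$ already encodes that identity. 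For the right one the difference is more substantive: the paper applies the preinvexity inequality (\ref{2.3}) for the original pair $A,B$ termwise to every node $A+\frac{i}{k^p}\eta(B,A)$ (its inequality (\ref{2.16})), whereas you bound the sum by $\frac{\varphi(0)+\varphi(1)}{2}=\frac{1}{2}\left(\langle f(A)x,x\rangle+\langle f(V)x,x\rangle\right)$ using only convexity of $\varphi$, and then invoke preinvexity for the pair $A,B$ exactly once, at $t=1$, to get $f(V)\leq f(B)$. Your version makes explicit a point the paper passes over silently: preinvexity along the path $P_{AV}$ alone can only yield $\frac{f(A)+f(V)}{2}$ on the right (the endpoint of the path is $V$, not $B$), so the stated bound $\frac{f(A)+f(B)}{2}$ genuinely requires the preinvexity inequality for the pair $A,B$ itself, which is not literally contained in the theorem's ``preinvex on $P_{AV}$'' hypothesis. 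Both proofs need this extra input --- the paper uses it many times where you use it once --- so neither is more general in substance, but your bookkeeping isolates the only place it enters.
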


\begin{proof}
For $x\in H$ with $\|x\|=1$ and $t\in [0,1]$, we have
\begin{equation}\label{2.9}
\langle (A+t\eta(B,A))x,x\rangle =(1-t)\langle Ax,x\rangle+t\langle Vx,x\rangle \in I,
\end{equation}
since $\langle Ax,x\rangle\in Sp(A)\subseteq I$ and $\langle Vx,x\rangle\in Sp(V)\subseteq I$.

Continuity of $f$ and (\ref{2.9}) imply that the operator valued integral $\int_0^1 f(A+t\eta(B,A))dt$
exists. Since $\eta$ satisfied condition $C$, therefore by (\ref{2.2}) for every $t\in[0,1] $ we have
\begin{equation}
\begin{aligned}\label{2.10}
A+\frac{1}{2}\eta(B,A)&=A+t\eta(B,A)+\frac{1}{2}\eta(A+(1-t)\eta(B,A), A+t\eta(B,A)).
\end{aligned}
\end{equation}
Let $x\in H$ be a unit vector,
define the real-valued function $\varphi:[0,1]\rightarrow \mathbb{R}$ given by
$\varphi(t)=\langle f(A+t\eta(B,A))x,x\rangle$. Since $f$ is operator preinvex,
by the previous proposition \ref {p1}, $\varphi$ is a convex function on $[0,1]$.
Utilizing the classical Hermite-Hadamard inequality for real-valued convex
function $\varphi$ on the interval $[\frac{i}{k^p},~\frac{i+1}{k^p}]$, we get
\begin{equation}
\begin{aligned}\label{2.11}
\varphi\left(\frac{2i+1}{2k^p}\right)\leq k^p\int_{\frac{i}{k^p}}^{\frac{i+1}{k^p}} \varphi(t)dt\leq\frac{\varphi(\frac{i}{k^p})+\varphi(\frac{i+1}{k^p})}{2}
\end{aligned}
\end{equation}
Summation of the above inequalities over $i=0,...,k^p-1$ yields
\begin{equation}
\begin{aligned}\label{2.12}
\sum_{i=0}^{k^p-1}\varphi\left(\frac{2i+1}{2k^p}\right)\leq k^p\int_0^1\varphi(t)dt\leq\sum_{i=0}^{k^p-1}\frac{\varphi(\frac{i}{k^p})+\varphi(\frac{i+1}{k^p})}{2}.
\end{aligned}
\end{equation}
Hence
\begin{multline}\label{2.13}
\frac{1}{k^p}\sum_{i=0}^{k^p-1}f\left(A+\frac{2i+1}{2k^p}\eta(B,A)\right)\\
\leq\int_0^1 f(A+t\eta(B,A))dt\\
\leq\frac{1}{2k^p}\sum_{i=0}^{k^p-1}\left[f\left(A+\frac{i+1}{k^p}\eta(B,A)\right)
+f\left(A+\frac{i}{k^p}\eta(B,A)\right)\right].
\end{multline}
Inequality (\ref{2.10}) for $t=\frac{2i+1}{k^p}$ and operator preinvexity $f$ imply that
\begin{equation}
\begin{aligned}\label{2.14}
f&\left(A +\frac{1}{2}\eta(B,A)\right)\\
&\leq\frac{1}{2}f\left(A+\frac{2i+1}{k^p}\eta(B,A)\right)+\frac{1}{2}f\left(A+\left(1-\frac{2i+1}{k^p}\right)\eta(B,A)\right).
\end{aligned}
\end{equation}
Summation of the above inequalities over $i=0,...,k^p-1$ and the following equality \[\sum_{i=0}^{k^p-1}f\left(A+\frac{2i+1}{k^p}\eta(B,A)\right)=\sum_{i=0}^{k^p-1}f\left(A+\left(1-\frac{2i+1}{k^p}\right)\eta(B,A)\right)
\] yield
\begin{equation}
\begin{aligned}\label{2.15}
k^pf&\left(A +\frac{1}{2}\eta(B,A)\right)
\leq\sum_{i=0}^{k^p-1}f\left(A+\frac{2i+1}{k^p}\eta(B,A)\right).
\end{aligned}
\end{equation}
In the other hand, from preinvexity $f$ we have
\begin{multline}\label{2.16}
\frac{1}{2k^p}\sum_{i=0}^{k^p-1}\left[f\left(A+\frac{i+1}{k^p}\eta(B,A)\right)
+f\left(A+\frac{i}{k^p}\eta(B,A)\right)\right]\\
\leq \frac{1}{2k^p}\sum_{i=0}^{k^p-1}\left[\frac{i+1}{k^p}f(B)+\left(1-\frac{i+1}{k^p}\right)f(A)\right.
\left.+\frac{i}{k^p}f(B)+\left(1-\frac{i}{k^p}\right) f(A)\right]\\
\leq\frac{f(A)+f(B)}{2}.
\end{multline}
From inequalities (\ref{2.13}), (\ref{2.15}) and (\ref{2.16}) we obtain (\ref{2.8}).
\end{proof}

A simple consequence of the above theorem is that the integral is closer
to the left bound than to the right, namely we can state:
\begin{corollary}\label{c1}
With the assumptions in Theorem \ref{t4} we have the inequality
\begin{align*}
0\leq \int_0^1 f(A+t\eta(B,A))dt-f\left(A+\frac{1}{2}\eta(B,A)\right)\leq \frac{f(A)+f(B)}{2}-\int_0^1 f(A+t\eta(B,A))dt.
\end{align*}
\end{corollary}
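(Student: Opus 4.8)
The plan is to reduce the operator statement to the real-valued convex function attached to $f$ by Proposition \ref{p1} and then apply the classical ``companion'' of the Hermite--Hadamard inequality to it. Fix a unit vector $x \in H$ and set $\varphi(t) := \langle f(A + t\eta(B,A))x, x\rangle$ for $t \in [0,1]$. By Proposition \ref{p1} the hypotheses of Theorem \ref{t4} guarantee that $\varphi$ is convex on $[0,1]$, and by continuity of $f$ the operator-valued integral $\int_0^1 f(A+t\eta(B,A))\,dt$ exists with $\langle(\int_0^1 f(A+t\eta(B,A))\,dt)x,x\rangle = \int_0^1 \varphi(t)\,dt$. All the quantities in the claim are operators, and since $\langle Tx,x\rangle\le\langle Sx,x\rangle$ for every unit $x$ is exactly $T\le S$ in the operator order, it suffices to establish both inequalities in quadratic form for an arbitrary unit $x$.

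The left inequality is immediate: the left-hand Hermite--Hadamard bound for $\varphi$ on $[0,1]$ (equivalently, Theorem \ref{t4} with $k^p=1$) gives $\varphi(\tfrac12) \leq \int_0^1 \varphi(t)\,dt$, so $0 \leq \int_0^1 f(A+t\eta(B,A))\,dt - f(A+\tfrac12\eta(B,A))$ in the operator order. For the right inequality I would invoke Theorem \ref{t4} with $k^p = 2$, whose intermediate upper bound is obtained most transparently by applying the right-hand Hermite--Hadamard bound separately on $[0,\tfrac12]$ and $[\tfrac12,1]$: this yields $2\int_0^{1/2}\varphi \leq \tfrac12(\varphi(0)+\varphi(\tfrac12))$ and $2\int_{1/2}^{1}\varphi \leq \tfrac12(\varphi(\tfrac12)+\varphi(1))$, and adding them gives $2\int_0^1\varphi(t)\,dt \leq \varphi(\tfrac12) + \tfrac12(\varphi(0)+\varphi(1))$. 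Translated back into quadratic forms this reads $2\int_0^1 f(A+t\eta(B,A))\,dt \leq f(A+\tfrac12\eta(B,A)) + \tfrac12(f(A)+f(V))$, where $V := A+\eta(B,A)$.

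Finally, the operator preinvexity inequality (\ref{2.3}) evaluated at $t=1$ gives $f(V) = f(A+\eta(B,A)) \leq f(B)$, hence $\tfrac12(f(A)+f(V)) \leq \tfrac{f(A)+f(B)}{2}$; substituting and rearranging $2\int_0^1 f(A+t\eta(B,A))\,dt \leq f(A+\tfrac12\eta(B,A)) + \tfrac{f(A)+f(B)}{2}$ produces $\int_0^1 f(A+t\eta(B,A))\,dt - f(A+\tfrac12\eta(B,A)) \leq \tfrac{f(A)+f(B)}{2} - \int_0^1 f(A+t\eta(B,A))\,dt$. As these hold for every unit vector $x$, they hold in the operator order, giving the claimed chain. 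I expect the only genuine step to watch is the passage from $f(V)$ to $f(B)$: the natural companion inequality produces the endpoint $V = A+\eta(B,A)$ of the $\eta$-path rather than $B$ itself, and one must invoke preinvexity at the endpoint $t=1$ (legitimate since $V\in P_{AV}$) to reach the symmetric bound $\tfrac{f(A)+f(B)}{2}$ appearing in the statement.
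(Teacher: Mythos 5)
Your proof is correct and follows essentially the route the paper intends: Corollary \ref{c1} is stated there without proof as a ``simple consequence'' of Theorem \ref{t4}, and your argument is precisely the natural instantiation of that theorem with $k^p=2$ (equivalently, the right Hermite--Hadamard bound applied to the scalar convex function $\varphi$ on the two half-intervals), combined with the endpoint estimate $f(A+\eta(B,A))\le f(B)$ coming from preinvexity at $t=1$. The only remark worth adding is that this endpoint step, which you rightly flag as the one point to watch, is already contained in the statement of Theorem \ref{t4} itself (it is the last inequality of the chain in the case $k^p=1$), so the corollary indeed follows from the theorem's statement alone.
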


\begin{example}
Let $S,~f,~\eta_1$ be as in Example \ref{e2}, then we have
\begin{equation*}
\left(A+\frac{1}{2}\eta_1(B,A)\right)^2\leq \int_0^1 (A+t\eta_1(B,A))^2dt\leq \frac{A^2+B^2}{2},
\end{equation*}
for every $A,B\in S$.
\end{example}

Let $S\subseteq B(H)_{sa}$ be an invex set with respect to $\eta:S\times S\rightarrow B(H)_{sa}$ and $f,g : I \rightarrow \mathbb{R}$ operator preinvex functions on the interval $I$ with respect to $\eta$ for operators in $\eta$-path $P_{AV}$. Then
for every $A,B\in S$ with spectra of $A, V:=A+\eta(B,A)$ in the interval $I$, we define real functions
$M(A,B)$ and $N(A,B)$  on $H$ by
\begin{align*}
M(A,B)(x)&=\langle f(A)x,x\rangle\langle g(A)x,x\rangle+\langle f(B)x,x\rangle\langle g(B)x,x\rangle\quad &&(x\in H),\\
N(A,B)(x)&=\langle f(A)x,x\rangle\langle g(B)x,x\rangle+\langle f(B)x,x\rangle\langle g(A)x,x\rangle\quad &&(x\in H).
\end{align*}

The following Theorem is a generalization of Theorem \ref{t1} for operator preinvex functions.
\begin{theorem}\label{t5}
Let $f,g : I \rightarrow \mathbb{R^+}$ be operator preinvex functions on the interval $I$ with respect to $\eta$
and $\eta$ satisfies condition $C$. Then
for any selfadjoint operators $A$ and $B$ on a Hilbert space $H$ with spectra $A,~ V$ in $I$, the inequality

\begin{multline}\label{2.17}
\int_{0}^{1}\langle f(A+t\eta(B,A))x,x\rangle\langle g(A+t\eta(B,A))x,x\rangle dt\\
\leq\frac{1}{3}M(A,B)(x)+\frac{1}{6}N(A,B)(x),
\end{multline}

holds for any $x\in H$ with $\|x\| = 1$.
\end{theorem}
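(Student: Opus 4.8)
The plan is to reduce the operator-valued statement to a scalar inequality by fixing a unit vector $x\in H$ and passing to the real-valued functions $\varphi(t):=\langle f(A+t\eta(B,A))x,x\rangle$ and $\psi(t):=\langle g(A+t\eta(B,A))x,x\rangle$ on $[0,1]$. The left-hand side of (\ref{2.17}) is then exactly $\int_0^1\varphi(t)\psi(t)\,dt$, an ordinary Riemann integral of a continuous scalar function, so all operator-integration subtleties disappear once the inner product has been taken.

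First I would record the pointwise preinvexity bounds. Applying Definition \ref{d2} along the $\eta$-path $P_{AV}$ (with base point $A$ and direction $\eta(B,A)$, i.e.\ the definition relabeled $A\leftrightarrow B$) gives the operator inequalities $f(A+t\eta(B,A))\leq (1-t)f(A)+tf(B)$ and $g(A+t\eta(B,A))\leq (1-t)g(A)+tg(B)$; taking $\langle\,\cdot\,x,x\rangle$ on both sides yields
\begin{align*}
\varphi(t)&\leq (1-t)\langle f(A)x,x\rangle+t\langle f(B)x,x\rangle,\\
\psi(t)&\leq (1-t)\langle g(A)x,x\rangle+t\langle g(B)x,x\rangle.
\end{align*}
Because $f,g$ take values in $\mathbb{R}^+$ and the spectra of all the operators $A+t\eta(B,A)$ lie in $I$, each of $f(A+t\eta(B,A)),\,g(A+t\eta(B,A)),\,f(A),\,f(B),\,g(A),\,g(B)$ is a positive operator, so $\varphi(t),\psi(t)$ and both affine right-hand sides above are nonnegative for every $t\in[0,1]$.

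This nonnegativity is precisely what lets me multiply the two estimates without reversing them: from $0\leq\varphi\leq U_1$ and $0\leq\psi\leq U_2$ I obtain $\varphi\psi\leq U_1U_2$, where $U_1,U_2$ denote the two affine bounds. Expanding $U_1(t)U_2(t)$ produces $(1-t)^2\langle f(A)x,x\rangle\langle g(A)x,x\rangle+t^2\langle f(B)x,x\rangle\langle g(B)x,x\rangle+t(1-t)\bigl[\langle f(A)x,x\rangle\langle g(B)x,x\rangle+\langle f(B)x,x\rangle\langle g(A)x,x\rangle\bigr]$. Integrating term by term over $[0,1]$ and using $\int_0^1(1-t)^2\,dt=\int_0^1 t^2\,dt=\tfrac13$ together with $\int_0^1 t(1-t)\,dt=\tfrac16$ collapses the coefficients into exactly $\tfrac13 M(A,B)(x)+\tfrac16 N(A,B)(x)$, which is the asserted bound.

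I expect the only genuine point requiring care to be the justification of the multiplication step: the inequality $\varphi\psi\leq U_1U_2$ holds only because all four functions are nonnegative, and this is exactly where the hypothesis $f,g:I\to\mathbb{R}^+$ is essential—without nonnegativity one cannot multiply the two preinvexity estimates and preserve the inequality. Everything after that reduces to the elementary polynomial integration above, so no further obstacle is anticipated.
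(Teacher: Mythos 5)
Your proposal is correct and follows essentially the same route as the paper: apply operator preinvexity along the path $A+t\eta(B,A)$, take inner products to get the two scalar affine bounds, multiply them (using the nonnegativity guaranteed by $f,g:I\to\mathbb{R}^{+}$), and integrate the resulting polynomial in $t$ over $[0,1]$ to produce the coefficients $\tfrac13$ and $\tfrac16$. Your explicit justification of the multiplication step via nonnegativity is a point the paper uses only implicitly, but the argument is otherwise identical.
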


\begin{proof}
Continuity of $f,g$ and (\ref{2.9}) imply that the following operator valued integrals exist
\[
\int_0^1 f(B+t\eta(B,A))dt, \int_0^1 g(A+t\eta(B,A))dt, \int_0^1 (fg)(A+t\eta(B,A))dt.
\]

Since $f$ and $g$ are operator preinvex, therefore for $t$ in $[0,1]$ and $x\in H$ we have
\begin{equation}\label{2.18}
\langle f(A+t\eta(B,A))x,x\rangle\leq\langle (tf(B)+(1-t)f(A))x,x\rangle,
\end{equation}
\begin{equation}\label{2.19}
\langle g(A+t\eta(B,A))x,x\rangle\leq\langle(tg(B)+(1-t)g(A))x,x\rangle.
\end{equation}

From (\ref{2.18}) and (\ref{2.19}) we obtain

\begin{multline}\label{2.20}
\langle f(A+t\eta(B,A))x,x\rangle\langle g(A+t\eta(B,A))x,x\rangle\\
\leq (1-t)^2\langle f(A)x,x\rangle\langle g(A)x,x\rangle+t^2\langle f(B)x,x\rangle\langle g(B)x,x\rangle\\
+t(1-t)\left[\langle f(A)x,x\rangle\langle g(B)x,x\rangle+\langle f(B)x,x\rangle \langle g(A)x,x\rangle\right].
\end{multline}

Integrating both sides of (\ref{2.20}) over $[0,1]$ we get the required inequality (\ref{2.17}).
\end{proof}

\begin{theorem}\label{t6}
Let $f,g : I \rightarrow \mathbb{R}$ be operator preinvex functions on the interval $I$ with respect to $\eta$.
 If $\eta$ satisfies condition $C$, then
for any selfadjoint operators $A$ and $B$ on a Hilbert space $H$ with spectra $A,~ V$ in $I$, the inequality

\begin{multline}\label{2.21}
\left\langle f\left(A+\frac{1}{2}\eta(B,A)\right)x,x\right\rangle\left\langle g\left(A+\frac{1}{2}\eta(B,A)\right)x,x\right\rangle\\
\leq\frac{1}{2}\int_{0}^{1}\left\langle f(A+t\eta(B,A))x,x\rangle\langle g(A+t\eta(B,A))x,x\right\rangle dt \\
+\frac{1}{12}M(A,B)(x)+\frac{1}{6}N(A,B)(x),
\end{multline}

holds for any $x\in H$ with $\|x\| = 1$.
\end{theorem}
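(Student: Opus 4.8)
The plan is to reduce the operator statement to the scalar case by fixing a unit vector $x\in H$ and passing to the real-valued functions
$\varphi(t):=\langle f(A+t\eta(B,A))x,x\rangle$ and $\psi(t):=\langle g(A+t\eta(B,A))x,x\rangle$ on $[0,1]$. By Proposition \ref{p1}, both $\varphi$ and $\psi$ are convex on $[0,1]$, and since $f,g$ take nonnegative values the functions $\varphi,\psi$ are nonnegative as well. Observe that $\varphi(\tfrac12)=\langle f(A+\tfrac12\eta(B,A))x,x\rangle$ and $\psi(\tfrac12)=\langle g(A+\tfrac12\eta(B,A))x,x\rangle$, so the left-hand side of (\ref{2.21}) is exactly $\varphi(\tfrac12)\psi(\tfrac12)$, while the integral on the right-hand side is $\int_0^1\varphi(t)\psi(t)\,dt$. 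Thus the whole inequality becomes a statement about two scalar convex functions, and I would model the argument on Pachpatte's proof of Theorem \ref{t1}.

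First I would exploit midpoint convexity of $\varphi$ and $\psi$: writing $\tfrac12=\tfrac12\big(t+(1-t)\big)$ gives, for each $t\in[0,1]$,
\begin{align*}
\varphi\Big(\tfrac12\Big)&\leq\tfrac12\big(\varphi(t)+\varphi(1-t)\big),\\
\psi\Big(\tfrac12\Big)&\leq\tfrac12\big(\psi(t)+\psi(1-t)\big).
\end{align*}
Because all four quantities are nonnegative, these two inequalities may be multiplied, yielding
\[
\varphi\Big(\tfrac12\Big)\psi\Big(\tfrac12\Big)\leq\tfrac14\big(\varphi(t)+\varphi(1-t)\big)\big(\psi(t)+\psi(1-t)\big),
\]
and I would then expand the product into the four terms $\varphi(t)\psi(t)$, $\varphi(1-t)\psi(1-t)$, $\varphi(t)\psi(1-t)$ and $\varphi(1-t)\psi(t)$.

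Next I would integrate this bound over $t\in[0,1]$, treating the four terms separately. The two diagonal terms contribute equally, since the substitution $t\mapsto 1-t$ gives $\int_0^1\varphi(1-t)\psi(1-t)\,dt=\int_0^1\varphi(t)\psi(t)\,dt$, so together they produce $2\int_0^1\varphi\psi\,dt$. For the two cross terms I would invoke operator preinvexity in the form $\varphi(t)\leq(1-t)\langle f(A)x,x\rangle+t\langle f(B)x,x\rangle$ together with the analogous bounds for $\psi$, abbreviate $F_A:=\langle f(A)x,x\rangle$, $F_B:=\langle f(B)x,x\rangle$, $G_A,G_B$ likewise, and verify the elementary identity
\[
\varphi(t)\psi(1-t)+\varphi(1-t)\psi(t)\leq 2t(1-t)M(A,B)(x)+\big(t^2+(1-t)^2\big)N(A,B)(x).
\]
Using $\int_0^1 2t(1-t)\,dt=\tfrac13$ and $\int_0^1\big(t^2+(1-t)^2\big)\,dt=\tfrac23$, integration of the cross terms then contributes $\tfrac13 M(A,B)(x)+\tfrac23 N(A,B)(x)$.

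Assembling the pieces gives
\[
\varphi\Big(\tfrac12\Big)\psi\Big(\tfrac12\Big)\leq\tfrac14\Big(2\int_0^1\varphi\psi\,dt+\tfrac13 M(A,B)(x)+\tfrac23 N(A,B)(x)\Big),
\]
which collapses to the coefficients $\tfrac12$, $\tfrac1{12}$ and $\tfrac16$, i.e. to (\ref{2.21}). I expect no deep obstacle: the argument is entirely a reduction to the scalar Pachpatte computation via Proposition \ref{p1}. The only delicate points are bookkeeping, namely justifying the termwise multiplication of the two midpoint inequalities, where nonnegativity of $f$ and $g$ is essential (so the hypothesis should really read $f,g:I\to\mathbb{R}^+$ as in Theorem \ref{t5}), and carefully tracking the symmetry $t\mapsto 1-t$ so that the diagonal integrals coincide while the cross terms generate $M$ and $N$ with the stated weights.
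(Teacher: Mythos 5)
Your proof is correct and follows essentially the same route as the paper's: the paper obtains your two midpoint inequalities directly from condition $C$ (identity (\ref{2.10}) with $D=A+t\eta(B,A)$, $E=A+(1-t)\eta(B,A)$) and operator preinvexity rather than citing Proposition \ref{p1}, but that is the same underlying argument, and the subsequent multiplication, expansion into diagonal and cross terms, the symmetry $t\mapsto 1-t$, the cross-term bound $2t(1-t)M(A,B)(x)+(t^2+(1-t)^2)N(A,B)(x)$, and the final integration are identical, yielding the same coefficients $\tfrac12,\tfrac1{12},\tfrac16$. Your side remark is also well taken: the hypothesis should indeed read $f,g:I\to\mathbb{R}^+$ as in Theorem \ref{t5}, since the paper's own proof multiplies these inner-product inequalities termwise and needs nonnegativity exactly as yours does.
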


\begin{proof}Put $D=A+t\eta(B,A)$ and $E=A+(1-t)\eta(B,A)$, by (\ref{2.10}) we have $A+\frac{1}{2}\eta(B,A)=D+\frac{1}{2}\eta(E,D)$.
Since $f$ and $g$ are operator preinvex, therefore for any $t\in I$ and any $x\in H$ with $\|x\| = 1$ we observe that
\begin{multline}\label{2.22}
\left\langle f\left(A+\frac{1}{2}\eta(B,A)\right)x,x\right\rangle \left\langle g\left(A+\frac{1}{2}\eta(B,A)\right)x,x\right\rangle\\
=\left\langle f\left(D+\frac{1}{2}\eta(E,D)\right)x,x\right\rangle\left\langle g\left(D+\frac{1}{2}\eta(E,D)\right)x,x\right\rangle\\
\leq\left\langle \left(\frac{f(D)+f(E)}{2}\right)x,x\right\rangle\left\langle \left(\frac{g(D)+g(E)}{2}\right)x,x\right\rangle\\
\leq\frac{1}{4}[\langle f(D)x,x\rangle+\langle f(E)x,x\rangle][\langle g(D)x,x\rangle+\langle g(E)x,x\rangle]\\
\leq\frac{1}{4}[\langle f(D)x,x\rangle \langle g(D)x,x\rangle
+\langle f(E)x,x\rangle \langle g(E)x,x\rangle]\\
+\frac{1}{4}[t\langle f(A)x,x\rangle+(1-t)\langle f(B)x,x\rangle][(1-t)\langle g(A)x,x\rangle+t\langle g(B)x,x\rangle]\\
+[(1-t)\langle f(A)x,x\rangle+t\langle f(B)x,x\rangle][t\langle g(A)x,x\rangle+(1-t)\langle g(B)x,x\rangle]\\
=\frac{1}{4}[\langle f(D)x,x\rangle \langle g(D)x,x\rangle+\langle f(E)x,x\rangle\langle g(E)x,x\rangle]\\
+\frac{1}{4}2t(1-t)[\langle f(A)x,x\rangle\langle g(A)x,x\rangle+\langle f(B)x,x\rangle \langle g(B)x,x\rangle]\\
+(t^2+(1-t)^2)[\langle f(A)x,x\rangle \langle g(B)x,x\rangle+\langle f(B)x,x\rangle \langle g(A)x,x\rangle].
\end{multline}
We integrate both sides of (\ref{2.22}) over [0,1] and obtain
\begin{multline*}
\left\langle f\left(A+\frac{1}{2}\eta(B,A)\right)x,x\right\rangle \left\langle g\left(A+\frac{1}{2}\eta(B,A)\right)x,x\right\rangle\\
\leq\frac{1}{4}\int_{0}^{1}[\langle f(A+t\eta(B,A))x,x\rangle\langle g(A+t\eta(B,A))x,x\rangle\\
+\langle f(A+(1-t)\eta(B,A))x,x\rangle\langle g(A+(1-t)\eta(B,A))x,x\rangle]dt\\
+\frac{1}{12}M(A,B)(x)+\frac{1}{6}N(A,B)(x).
\end{multline*}

This implies the required inequality (\ref{2.21}).
\end{proof}

The following Theorem is a generalization of Theorem 3.1 in \cite{bara}.
\begin{theorem}\label{t6}
Let the function $f:I\rightarrow \mathbb{R^+}$ is continuous, $S\subseteq B(H)_{sa}$ be an open invex set with respect to $\eta:S\times S\rightarrow B(H)_{sa}$ and $\eta$ satisfies condition $C$. If
for every $A,B\in S$ and $V=A+\eta(B,A)$ the function $f$  is operator preinvex with respect to $\eta$ on $\eta-$path $P_{AV}$ with spectra of $A$ and $V$ in $I$. Then, for every $a,b\in (0,1)$ with $a<b$ and every $x\in H$ with $\|x\|=1$ the following inequality holds,
\begin{multline}\label{2.23}
\left|\frac{1}{2}\left\langle\int_0^a f(A+s\eta(B,A))ds~x,x\right\rangle +\frac{1}{2}\left\langle\int_0^b f(A+s\eta(B,A))ds~x,x\right\rangle\right. \\
\left. -\frac{1}{b-a}\int_a^b\left\langle\int_0^t f(A+s\eta(B,A))ds~x,x\right\rangle dt\right|\\
\leq \frac{b-a}{8} \{\langle f(A+a\eta(B,A))x,x\rangle+ \langle f(A+b\eta(B,A))x,x\rangle\}.
\end{multline}
Moreover we have
\begin{multline}\label{2.24}
\left\|\frac{1}{2}\int_0^a f(A+s\eta(B,A))ds +\frac{1}{2}\int_0^b f(A+s\eta(B,A))ds\right. \\
\left. -\frac{1}{b-a}\int_a^b\int_0^t f(A+s\eta(B,A))ds dt \right\|\\
\leq \frac{b-a}{8} \| f(A+a\eta(B,A))+  f(A+b\eta(B,A))\|\\
\leq \frac{b-a}{8}[~ \| f(A+a\eta(B,A))\|+\|f(A+b\eta(B,A))\|~].
\end{multline}
\end{theorem}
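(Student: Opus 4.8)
The plan is to fix a unit vector $x\in H$ and collapse the entire statement onto a single scalar trapezoid-type inequality for the convex function supplied by Proposition \ref{p1}. Set $\varphi(s):=\langle f(A+s\eta(B,A))x,x\rangle$ for $s\in[0,1]$. Since $f$ is operator preinvex on the $\eta$-path $P_{AV}$, Proposition \ref{p1} tells us $\varphi$ is convex on $[0,1]$; since $f:I\to\mathbb{R}^{+}$, each $f(A+s\eta(B,A))$ is a positive operator, so $\varphi\ge 0$. Because the inner product is continuous and linear it commutes with the operator-valued integral (whose existence follows from continuity of $s\mapsto f(A+s\eta(B,A))$, exactly as in the remark after (\ref{2.9})), so that $\big\langle\int_0^t f(A+s\eta(B,A))\,ds\,x,x\big\rangle=\int_0^t\varphi(s)\,ds=:\Phi(t)$. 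Hence the quantity inside the modulus in (\ref{2.23}) equals $\frac{\Phi(a)+\Phi(b)}{2}-\frac{1}{b-a}\int_a^b\Phi(t)\,dt$, and the asserted estimate is precisely the scalar inequality
\[
\left|\frac{\Phi(a)+\Phi(b)}{2}-\frac{1}{b-a}\int_a^b\Phi(t)\,dt\right|\le\frac{b-a}{8}\bigl(\varphi(a)+\varphi(b)\bigr).
\]

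Next I would establish this scalar inequality, which is the heart of the matter and the place where Theorem 3.1 of \cite{bara} is being generalized. Since $\Phi'=\varphi$ is continuous, integrating by parts against the kernel $t-\tfrac{a+b}{2}$ gives the identity $\frac{\Phi(a)+\Phi(b)}{2}-\frac{1}{b-a}\int_a^b\Phi=\frac{1}{b-a}\int_a^b\bigl(t-\tfrac{a+b}{2}\bigr)\varphi(t)\,dt$, so it suffices to bound this last integral. Writing $m=\tfrac{a+b}{2}$ and splitting the integral at $m$: on $[m,b]$ the kernel is nonnegative and I bound $\varphi$ above by its chord $\tfrac{b-t}{b-a}\varphi(a)+\tfrac{t-a}{b-a}\varphi(b)$, while on $[a,m]$ the kernel is nonpositive and $\varphi\ge0$ forces the integrand to be $\le0$. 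After an affine normalization to $[0,1]$ the two elementary chord integrals evaluate to the weights $\tfrac1{48}$ and $\tfrac5{48}$, giving the upper estimate $\frac{1}{b-a}\int_a^b(t-m)\varphi\le (b-a)\bigl(\tfrac1{48}\varphi(a)+\tfrac5{48}\varphi(b)\bigr)$; since $\varphi\ge0$ and $\tfrac1{8}=\tfrac{6}{48}$, the right side is $\le\frac{b-a}{8}(\varphi(a)+\varphi(b))$. The matching lower bound follows from the reflection $t\mapsto a+b-t$, which preserves convexity and nonnegativity. Together these yield the boxed scalar inequality and hence (\ref{2.23}).

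For the norm version (\ref{2.24}), I would pass from numerical values to the operator norm. The operator $W:=\frac12\int_0^a f(A+s\eta(B,A))\,ds+\frac12\int_0^b f(A+s\eta(B,A))\,ds-\frac{1}{b-a}\int_a^b\int_0^t f(A+s\eta(B,A))\,ds\,dt$ is selfadjoint, because $f$ is real-valued and each integrand is selfadjoint. For a selfadjoint operator the numerical radius equals the norm, $\|W\|=\sup_{\|x\|=1}|\langle Wx,x\rangle|$, and $\langle Wx,x\rangle$ is exactly the expression inside the modulus in (\ref{2.23}); taking the supremum over unit vectors in (\ref{2.23}) gives $\|W\|\le\frac{b-a}{8}\sup_{\|x\|=1}(\varphi(a)+\varphi(b))$. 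Since $P:=f(A+a\eta(B,A))+f(A+b\eta(B,A))$ is a positive operator, $\sup_{\|x\|=1}\langle Px,x\rangle=\|P\|$, which is the first inequality in (\ref{2.24}); the second is just the triangle inequality $\|f(A+a\eta(B,A))+f(A+b\eta(B,A))\|\le\|f(A+a\eta(B,A))\|+\|f(A+b\eta(B,A))\|$.

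The main obstacle is the scalar trapezoid inequality, and specifically extracting the sharp constant $\tfrac18$ from convexity together with nonnegativity of $\varphi$: the sign change of the kernel $t-\tfrac{a+b}{2}$ rules out a naive single-chord estimate, so the decomposition at the midpoint (chord bound on the far half, the nonnegativity bound on the near half) is the essential device. By contrast the reduction from operators to scalars via Proposition \ref{p1}, and the passage from the pointwise estimate to the norm estimate via the numerical-radius identity for selfadjoint operators, are routine.
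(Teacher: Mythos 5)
Your proof is correct, and it differs from the paper's in one substantive way: where you prove the key scalar trapezoid inequality from scratch, the paper simply cites it. The paper's proof makes the same reduction you do --- it defines $\varphi(t):=\bigl\langle\int_0^t f(A+s\eta(B,A))\,ds\,x,x\bigr\rangle$ (your $\Phi$), swaps the inner product with the operator-valued integral, notes $\varphi'(t)=\langle f(A+t\eta(B,A))x,x\rangle\geq 0$ so that $|\varphi'|=\varphi'$, and invokes Proposition \ref{p1} to get convexity of $\varphi'$ --- but then it applies Theorem 2.2 of \cite{dra3} (the Dragomir--Agarwal trapezoid inequality for differentiable functions with $|\varphi'|$ convex) as a black box to obtain \eqref{2.23}, and finishes exactly as you do by taking the supremum over unit vectors to get \eqref{2.24}. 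Your replacement for that citation is sound: the integration-by-parts identity against the kernel $t-\tfrac{a+b}{2}$ is correct, the chord bound on $[\tfrac{a+b}{2},b]$ together with discarding the nonpositive near-half integral gives the weights $\tfrac{1}{48}$ and $\tfrac{5}{48}$ (both $\leq\tfrac{6}{48}=\tfrac18$, using $\varphi\geq 0$), and the reflection $t\mapsto a+b-t$ legitimately supplies the matching lower bound. The trade-off is worth noting: the Dragomir--Agarwal route needs only convexity of $|\varphi'|$, not nonnegativity, whereas your midpoint-split argument leans on $\varphi\geq 0$ twice (to drop the near-half integral and to inflate $\tfrac{1}{48},\tfrac{5}{48}$ up to $\tfrac18$); that hypothesis is available here because $f$ maps into $\mathbb{R}^{+}$, so nothing is lost, and in exchange your proof is self-contained and even shows the constant $\tfrac18$ is not sharp in this nonnegative setting. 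Your justification of the norm step (numerical radius equals norm for the selfadjoint operator $W$, and $\sup_{\|x\|=1}\langle Px,x\rangle=\|P\|$ for the positive operator $P$) is a more careful writing of what the paper compresses into ``taking supremum over both sides.''
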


\begin{proof}
Let $A,B\in S$ and $a,b\in (0,1)$ with $a<b$. For $x\in H$ with $\|x\|=1$ we define the function $\varphi:[0,1]\rightarrow{\Bbb R}^{+}$ by

\[
\varphi(t):=\left\langle \int_0^tf(A+s\eta(B,A))ds~x,x\right\rangle.
\]
Utilizing the continuity of the function $f$, the continuity property of
the inner product and the properties of the integral of operator-valued functions
we have
\[
\left\langle \int_0^t f(A+s\eta(B,A))ds~x,x\right\rangle=\int_0^t\left\langle f(A+s\eta(B,A))~x,x\right\rangle ds.
\]
Since $f(A+s\eta(B,A))\geq 0$, therefore $\varphi(t)\geq 0$ for all $t\in I$.
Obviously for every  $t\in(0,1)$ we have
\[
\varphi^{\prime}(t)=\langle f(A+t\eta(B,A))x,x\rangle\geq0,
\]
hence, $|\varphi^{\prime}(t)|=\varphi^{\prime}(t)$. Since $f$ is operator preinvex with respect to $\eta$ on $\eta-$path $P_{AV}$, by Proposition \ref{p1} the function $\varphi^{\prime}$ is convex. Applying Theorem 2.2 in \cite{dra3} to the function $\varphi$ implies that
\[
\left| \frac{\varphi(a)+\varphi(b)}{2}-\frac{1}{b-a}\int_a^b\varphi(s)ds\right|\leq\frac{(b-a)\left(\varphi^{\prime}(a)+\varphi^{\prime}(b)\right)}{8},
\]
and we deduce that (\ref{2.23}) holds.
Taking supremum  over both side of inequality (\ref {2.23}) for all $x$ with $\|x\|=1$,  we deduce that the inequality (\ref{2.24}) holds.
\end{proof}

\bibliographystyle{amsplain}

\end{document}